\newtheorem{thm}{Theorem}
\newtheorem{lem}[thm]{Lemma}
\newtheorem{cor}[thm]{Corollary}
\newtheorem{prob}{Problem}
\newtheorem{conj}{Conjecture}
\newcommand{\vf}{\varphi}
\renewcommand{\r}{\rho}
\renewcommand{\le}{\leqslant}
\renewcommand{\ge}{\geqslant}
\renewcommand{\a}{\alpha}
\renewcommand{\t}{\tau}
\newcommand{\g}{\gamma}
\newcommand{\ov}{\overline}
\newcommand{\FF} {\mathbb{F}}
\newcommand{\ZZ} {\mathbb{Z}}
\newcommand{\Ker}{\mathop\mathrm{Ker}\nolimits}
\newcommand{\Ext}{\mathop\mathrm{Ext}\nolimits}
\newcommand{\Hom}{\mathop\mathrm{Hom}\nolimits}
\newcommand{\PSL}{\mathop\mathrm{PSL}\nolimits}
\newcommand{\Syl}{\mathop\mathrm{Syl}\nolimits}
\newcommand{\SL}{\mathop\mathrm{SL}\nolimits}
\renewcommand{\Im}{\mathop\mathrm{Im}\nolimits}
\renewcommand{\P}{\mathcal{P}}
\renewcommand{\L}{\mathrm{L}}
\title{Subextensions for a permutation $\PSL_2(q)$-module}
\author{Andrei V. Zavarnitsine}
\address{\textup{\scriptsize
Andrei V. Zavarnitsine\\
Group Theory Lab.\\
Sobolev Institute of Mathematics\\
4, Koptyug av.\\
630090, Novosibirsk, Russia\\
}}
\email{zav@math.nsc.ru}
\thanks{Supported by the Russian Foundation for Basic Research
(projects 11--01--00456, 11--01--91158, 12--01--90006, 13--01--00505);
by the Federal Target Grant ``Scientific and
educational personnel of innovation Russia'' (contract 14.740.11.0346).}
\date{}
\begin{document}
\begin{abstract} Using cohomological methods, we prove the existence of a subgroup isomorphic to $\SL_2(q)$, $q\equiv -1\pmod 4$, in the permutation module for $\PSL_2(q)$ in characteristic $2$ that arises from the action on the projective line. A similar problem for $q\equiv 1\pmod 4$ is reduced to the determination of certain first cohomology groups.

{\sc Keywords:} finite simple groups, permutation module, group cohomology

{\sc MSC2010:} 20D06,  
            	20J06, 
                20B25  
\end{abstract}
\maketitle

\section{Introduction}

We denote by $\FF_q$ a finite field of order $q$ and by $\ZZ_n$ a cyclic group of order $n$.

Let $q$ be an odd prime power and let $G=\PSL_2(q)$. From the Universal embedding \cite[Theorem 2.6.A]{dm}, it follows that the regular wreath product $\ZZ_2 \wr G$ contains a subgroup isomorphic to $\SL_2(q)$.
It is of interest to know if the same is true for a permutation wreath product that is not necessarily regular. In particular, let  $\r$ be the natural permutation representation of $G$ of degree $q+1$ on the projective line over $\FF_q$. The following problem arose in the research~\cite{mr}.

\begin{prob} \label{mpr}
Does the permutation wreath product $\ZZ_2 \wr_\r G$ contain a subgroup isomorphic to $\SL_2(q)$?
\end{prob}
Although stated in purely group-theoretic terms, this problem is cohomologic in nature. In the next section, we reformulate a generalized version of this question as an assertion about a homomorphism between second cohomology groups of group modules. We then apply some basic theory to obtain the following

\begin{thm} \label{main}
If $q\equiv -1 \pmod 4$ then the answer to Problem \ref{mpr} is affirmative.
\end{thm}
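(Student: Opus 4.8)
I would first translate the question into cohomology. Write $\ZZ_2\wr_\r G=M\rtimes G$, where $M$ is the permutation $\FF_2G$-module on the $q+1$ points of the projective line, so that $G$ permutes a basis of $M$ through $\r$. If $K\le M\rtimes G$ is isomorphic to $\SL_2(q)$, then the projection $\vf\colon K\to G$ has abelian normal kernel $K\cap M$, which is consequently contained in $\Z(K)\cong\ZZ_2$; since $K\cap M=1$ would make $\vf$ injective, contradicting $|K|=2|G|$, we conclude $K\cap M=\Z(K)$ and $\vf$ is surjective. Thus $A:=K\cap M$ is a one-dimensional $G$-submodule of $M$, necessarily the unique trivial submodule $M^G$ (spanned by the all-ones vector), $K/A$ is a complement of $M/A$ in $(M/A)\rtimes G$, and $K$ is a central extension of $G$ by $A\cong\ZZ_2$ realizing the distinguished class $\eta$ of $H^2(G,\FF_2)\cong\FF_2$ (the class of the non-split extension $\SL_2(q)$; conversely any such extension is $\cong\SL_2(q)$). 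By the standard description of the connecting homomorphism $\partial\colon H^1(G,M/A)\to H^2(G,A)$ of $0\to A\to M\to M/A\to0$, such a $K$ exists if and only if $\eta\in\Im\partial$, equivalently, by exactness of the long cohomology sequence, if and only if $\iota_*\eta=0$ in $H^2(G,M)$, where $\iota\colon A\hookrightarrow M$. This is the reformulation as a vanishing statement for a map between second cohomology groups.

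I would then evaluate $\iota_*$ by Shapiro's lemma. Here $M=\FF_2[G/P]$, where $P$ is the point stabilizer, a Borel subgroup of $G$ of order $q(q-1)/2$; so $H^n(G,M)\cong H^n(P,\FF_2)$, and under this isomorphism $\iota_*$ (induced by the inclusion of the fixed-point submodule) corresponds to the restriction map $H^2(G,\FF_2)\to H^2(P,\FF_2)$. Hence the task reduces to showing that $\SL_2(q)$ splits over $P$: writing $\wt P\le\SL_2(q)$ for the preimage of $P$, which is the Borel subgroup of upper-triangular matrices, one must exhibit a complement of $\Z(\SL_2(q))=\{\pm I\}$ in $\wt P$.

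This is where the hypothesis $q\equiv-1\pmod4$ is used. Write $\wt P=U\rtimes T$, with $U\cong(\FF_q,+)$ the unipotent radical and $T\cong\FF_q^*$ the diagonal torus acting on $U$ by $a\cdot b=a^2b$; the centre $\{\pm I\}$ is the subgroup of order $2$ of $T$. When $q\equiv-1\pmod4$ we have $q-1=2m$ with $m$ odd, so the cyclic group $\FF_q^*$ is the internal direct product of $\{\pm1\}$ with its unique subgroup $S$ of order $m$, and in particular $-1\notin S$. Then $U\rtimes S\le\wt P$ has order $q(q-1)/2$, meets $\{\pm I\}$ trivially, and generates $\wt P$ together with $\{\pm I\}$; hence it is a complement. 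Therefore the restriction of $\eta$ to $P$ vanishes, so $\iota_*\eta=0$, so $\eta\in\Im\partial$, and unwinding the reformulation produces the desired subgroup $K\cong\SL_2(q)$ of $\ZZ_2\wr_\r G$.

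The real work, and the main obstacle, lies in the first two steps: checking that the kernel $A$ is forced to be the trivial submodule, that the pertinent class is $\eta$ with $H^2(G,\FF_2)\cong\FF_2$, and that $\iota_*$ coincides with restriction to $P$; once these are in place the theorem drops out of the one-line splitting argument above. The hypothesis is used only to guarantee $-1\notin S$; when $q\equiv1\pmod4$ the element $-1$ is a square in $\FF_q$, the obvious candidate $U\rtimes S$ contains $-I$, and the analogous question then requires the finer cohomological information alluded to in the abstract.
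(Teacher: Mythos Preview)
Your argument is correct and follows essentially the same route as the paper: reformulate the question as whether the class $\eta$ of $\SL_2(q)$ lies in the kernel of $\iota_*\colon H^2(G,\FF_2)\to H^2(G,M)$, then invoke Shapiro's lemma to pass to the Borel subgroup $P$. The only cosmetic difference is that the paper kills the entire target at once---since $|P|=q(q-1)/2$ is odd when $q\equiv -1\pmod 4$, Schur--Zassenhaus gives $H^2(P,\FF_2)=0$---whereas you identify $\iota_*$ with restriction and exhibit the complement $U\rtimes S$ explicitly; both encode the same observation that the Borel has odd order.
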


It seems that the case $q\equiv 1 \pmod 4$ is more complicated and requires some deeper considerations than those presented here. We put forward
\begin{conj} \label{cn}
If $q\equiv 1 \pmod 4$ then $\SL_2(q)$ is not embedded in $\ZZ_2 \wr_\r G$.
\end{conj}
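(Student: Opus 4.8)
The plan is to prove the contrapositive of the embedding criterion obtained in the reformulation of Section~2. Write $M=\FF_2^{\,q+1}$ for the permutation $\FF_2 G$-module afforded by $\r$ and $\xi\in H^2(G,\FF_2)$ for the class of the central extension $1\to\ZZ_2\to\SL_2(q)\to G\to1$. The criterion is that an isomorphic copy of $\SL_2(q)$ sits inside $\ZZ_2\wr_\r G=M\rtimes G$ exactly when the image $j_*(\xi)$ vanishes, where $j\colon\FF_2\hookrightarrow M$ is the inclusion of the one-dimensional fixed submodule $M^G=\la\mathbf 1\ra$ and $j_*\colon H^2(G,\FF_2)\to H^2(G,M)$. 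It therefore suffices to show $j_*(\xi)\ne0$ when $q\equiv1\pmod4$. The first step is to localise at $p=2$: since $M$ is an $\FF_2$-module every group in sight is an $\FF_2$-space, and for a Sylow $2$-subgroup $P\le G$ the composite $\mathrm{cor}\circ\mathrm{res}$ is multiplication by the odd number $[G:P]$, hence the identity. Thus $\mathrm{res}\colon H^2(G,M)\to H^2(P,M|_{P})$ is injective, and by naturality of $j_*$ it is enough to prove $j_*(\mathrm{res}\,\xi)\ne0$ in $H^2(P,M|_{P})$.

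Next I identify the restricted class and unwind the module. The preimage $\wt P$ of $P$ in $\SL_2(q)$ is a Sylow $2$-subgroup of $\SL_2(q)$, which for odd $q$ is a generalised quaternion group whose unique involution is $-I$; so $\mathrm{res}\,\xi=\eta$ is the class of the non-split extension $1\to\ZZ_2\to\wt P\to P\to1$, with $P$ dihedral. Decomposing $M|_{P}$ along the $P$-orbits on the projective line gives $M|_{P}=\bigoplus_{O}\FF_2[O]$ with $\FF_2[O]=\mathrm{Ind}_{P_x}^P\FF_2$ for a point $x\in O$ of stabiliser $P_x$, and Shapiro's lemma yields $H^2(P,M|_{P})=\bigoplus_O H^2(P_x,\FF_2)$. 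Because $\mathbf 1$ is the sum of the orbit-wise all-ones vectors and the all-ones inclusion $\FF_2\hookrightarrow\mathrm{Ind}_{P_x}^P\FF_2$ induces $\mathrm{res}^P_{P_x}$ on $H^2$, the map $j_*$ is, under this identification, the tuple of restrictions $\bigl(\mathrm{res}^P_{P_x}(\eta)\bigr)_O$. Hence $j_*(\eta)\ne0$ if and only if $\mathrm{res}^P_{P_x}(\eta)\ne0$ for at least one orbit.

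The key local fact is that $\mathrm{res}^P_{P_x}(\eta)\ne0$ precisely when $P_x\ne1$. Indeed, if $P_x$ contains an involution $t$, its lifts to $\wt P$ are not involutions (the only involution of the generalised quaternion group $\wt P$ is $-I$, which maps to $1$), so the pulled-back extension over $\la t\ra$ is non-split and $\mathrm{res}^P_{\la t\ra}(\eta)\ne0$; since $\la t\ra\se P_x$, this forces $\mathrm{res}^P_{P_x}(\eta)\ne0$. Consequently $j_*(\xi)\ne0$ if and only if some involution of $G$ fixes a point of the projective line. Now an involution of $G=\PSL_2(q)$ is the image of an element $g\in\SL_2(q)$ of order $4$ with $g^2=-I$, whose eigenvalues are the primitive fourth roots of unity; these lie in $\FF_q$, making $g$ diagonalisable over $\FF_q$ and its image fix two points, exactly when $\sqrt{-1}\in\FF_q$, that is, when $q\equiv1\pmod4$. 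This gives $j_*(\xi)\ne0$, whence the criterion rules out the embedding. (For $q\equiv-1\pmod4$ no involution fixes a point, $P$ acts freely, and one recovers $j_*(\xi)=0$, in agreement with Theorem~\ref{main}, so the method is uniform.)

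The step I expect to be most delicate is the Shapiro passage: one must verify that the orbit decomposition of $M|_{P}$ is genuinely compatible with the all-ones inclusion $j$ and that the summand maps are the restrictions $\mathrm{res}^P_{P_x}$ rather than transfers, so that the conclusion reads off an honest direct sum with no cancellation between orbits. A secondary point needing care is the behaviour at small $q$—for example $q=9$, where the Schur multiplier of $G$ exceeds $\ZZ_2$—but the argument is insulated from this, since it never invokes the global structure of $H^2(G,\FF_2)$ and uses only that $\mathrm{res}\,\xi$ is the generalised quaternion class, which is non-split for every odd $q$.
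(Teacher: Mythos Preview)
The paper does \emph{not} prove this statement: it is stated as a conjecture, confirmed only by computer for $q\in\{5,9,13,17\}$, and Section~6 merely reduces it to the (undetermined) values of $H^1(G,U_{\pm})$ via the long exact sequences attached to $0\to I\to V\to W\to 0$ and $0\to U\to W\to I\to 0$. So there is no ``paper's own proof'' to compare against.

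Your argument, by contrast, appears to be a complete and correct proof of the conjecture. Each step checks out: (i) restriction to a Sylow $2$-subgroup $P$ is injective on $\FF_2$-coefficient cohomology since $\mathrm{cor}\circ\mathrm{res}=[G:P]\cdot\mathrm{id}$ with $[G:P]$ odd; (ii) $V|_P$ splits as $\bigoplus_O\FF_2[O]$ over the $P$-orbits, and Shapiro gives $H^2(P,\FF_2[O])\cong H^2(P_x,\FF_2)$; (iii) the all-ones inclusion $\FF_2\hookrightarrow\FF_2[O]$ is precisely the unit of the $(\mathrm{Res},\mathrm{Coind})$ adjunction, and its composite with the Shapiro isomorphism is $\mathrm{res}^P_{P_x}$ (not transfer), so $j_*(\eta)$ is the honest tuple of restrictions with no cancellation; (iv) in the generalised quaternion group $\wt P$ the unique involution is central, so every involution of $P$ pulls back to a nonsplit $\ZZ_4$, forcing $\mathrm{res}^P_{P_x}(\eta)\ne 0$ whenever $P_x\ne 1$; (v) for $q\equiv 1\pmod 4$ any involution of $G$, having eigenvalues $\pm\sqrt{-1}\in\FF_q$, fixes two projective points, so every involution of $P$ lies in some $P_x$. (One can also see (v) by counting: the $2$-part of $|\P|=q+1$ is $2$, while $|P|\ge 4$, so $P$ cannot act freely.) Hence $j_*(\xi)\ne 0$, i.e.\ $\xi\notin\Ker\varphi$, and by Corollary~\ref{cors} the nonsplit extension $\SL_2(q)$ is not a subextension; combined with Lemma~\ref{bur} (uniqueness of the minimal submodule $I$), this rules out any embedding.

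The contrast with the paper's approach is that the paper stays at the level of $G$ and tries to compute $\Ker\varphi$ via the connecting map $\delta$ and the global cohomology groups $H^1(G,W)$, $H^1(G,U)$, which it cannot finish. You instead localise to $P$, where the permutation module becomes a direct sum of induced trivial modules and the question reduces to the purely local fact that the generalised quaternion cover of a dihedral group restricts nontrivially to every nontrivial subgroup. This bypasses entirely the need to know $H^1(G,U_{\pm})$. The point you flagged as delicate---that the summand maps are restrictions rather than transfers---is the standard identification of the adjunction unit with restriction under Eckmann--Shapiro and is correct as stated.
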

For small values $q=5,9,13,17$, Conjecture \ref{cn} was confirmed using a computer. In the last section, we show how to reduce this problem to the determination of the first cohomology groups $H^1(G,U_{\pm})$, where $U_+$ and $U_-$ are the two nontrivial absolutely irreducible $G$-modules in the principal $2$-block.

\section{Subextensions for group modules}

Let $G$ be a group and let $L,M$ be right $G$-modules. Let
\begin{equation}\label{emb}
0\to L \to M
\end{equation}
and
$$
1\to M \to E \stackrel{\pi}\to G \to 1
$$
be exact sequences of modules and groups, where the conjugation action of $E$ on $M$ agrees with the $G$-module structure, i.\,e. $m^e=m\cdot\pi(e)$ for all $m\in M$ and $e\in E$, and we identify $M$ with its image in $E$.
Then we call $E$ an {\em extension of $M$ by $G$}. It is natural to ask if there is a subgroup $H\le E$ such that
\begin{equation}\label{hpro}
H\cap M = L, \qquad HM = E,
\end{equation}
where we implicitly identify $L$ with its image in $M$. A subgroup $H$ with these properties is itself an extension of $L$ by $G$, and will thus be called a {\em subextension} of $E$ that corresponds to the embedding (\ref{emb}). The classification of all such subextensions of $E$ (whenever they exist) up to equivalence is also of interest.

Recall that extensions $H_1,H_2$ of $L$ by $G$ are {\em equivalent}
if there is a homomorphism $\a$ that makes the diagram

$$
\xymatrix@=1em{
 & H_1  \ar@{.>}[dd]^{ \textstyle{\a} }   \ar[dr] & \\
L\ar[ur]\ar[dr]& & G \\
 & H_2 \ar[ur] &
}
$$
commutative. It is known \cite{gr} that the equivalence classes of such extensions are in a one-to-one correspondence with (thus are {\em defined by}) the elements of the second cohomology group $H^2(G,L)$. Furthermore, the sequence (\ref{emb}) gives rise to a homomorphism
\begin{equation}\label{hhom}
H^2(G,L)\stackrel{\varphi}{\to} H^2(G,M).
\end{equation}
The following assertion is nothing more than an interpretation of this homomorphism in group-theoretic terms.

\begin{lem} \label{bas} Let $L,M$ be $G$-modules and $E$ an extension as specified above. Let $\bar\g\in H^2(G,M)$ be the element that defines $E$. Then
the set of elements of $H^2(G,L)$ that define the subextensions $H$ of $E$ corresponding to the embedding {\rm (\ref{emb})} coincides with $\vf^{-1}(\bar\g)$, where $\vf$ is the induced homomorphism {\rm (\ref{hhom})}. In particular,
$E$ has such a subextension $H$ if and only if $\bar\g\in \Im\vf$.
\end{lem}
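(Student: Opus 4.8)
The plan is to carry out the standard cocycle computation underlying the map $\vf$, in both directions. First I would fix a set-theoretic section $s\colon G\to E$ of $\pi$ with $s(\I)=\I$, and let $\g\colon G\times G\to M$ be the associated normalized $2$-cocycle, defined by $s(x)s(y)=\g(x,y)\,s(xy)$; by hypothesis its class is $\bar\g$. The two facts I would lean on, both standard, are: (i) the $2$-cocycles obtained in this way from the \emph{various} sections of this one fixed extension $E$ form exactly one cohomology class, namely $\bar\g$; and (ii) on the level of cocycles, the induced map $\vf$ of (\ref{hhom}) is just "regard an $L$-valued cocycle as an $M$-valued cocycle via the inclusion $L\se M$''.

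For the forward inclusion, suppose $H\le E$ is a subextension corresponding to (\ref{emb}), so $H\cap M=L$ and $HM=E$. Then $\pi(H)=G$, and I would pick a section $s'\colon G\to H$ of $\pi|_H$ with $s'(\I)=\I$. Since $s'(x)s'(y)s'(xy)^{-1}$ lies in $H$ and in $\Ker\pi=M$, hence in $H\cap M=L$, the associated cocycle $\g'$ is $L$-valued, and by construction its class $\bar\g'\in H^2(G,L)$ is the one defining $H$. Viewing $s'$ as a section of $\pi$ itself, $\g'$ — now regarded as $M$-valued — is one of the cocycles of the extension $E$, hence cohomologous to $\g$ by (i); combined with (ii) this says $\vf(\bar\g')=\bar\g$, so $\bar\g'\in\vf^{-1}(\bar\g)$.

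For the reverse inclusion, I would take any $\bar\g'\in\vf^{-1}(\bar\g)$ with a normalized representative cocycle $\g'\colon G\times G\to L$. Since $\g'$, viewed in $M$, is cohomologous to $\g$, there is a map $t\colon G\to M$ so that the section $s'(x)=t(x)\,s(x)$, after normalization, has exactly $\g'$ as its associated cocycle. I would then set $H=\{\ell\,s'(x):\ell\in L,\ x\in G\}\se E$. Using that $L$ is a $G$-submodule of $M$, that $\g'$ takes values in $L$, and the conjugation rule $m^e=m\cdot\pi(e)$, a direct computation shows $H$ is closed under products and inverses, so $H\le E$; moreover $H\cap M=L$ (the elements of $H$ lying in $M$ are precisely those with $x=\I$, giving $\ell\,s'(\I)=\ell\in L$) and $HM=E$ (since $e=\big(e\,s'(\pi(e))^{-1}\big)\,s'(\pi(e))\in M\,s'(G)$). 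Thus $H$ is a subextension corresponding to (\ref{emb}), and it is defined by $\bar\g'$ because $s'|$ is a section of $\pi|_H$ realizing $\g'$. This gives the set equality, and the final statement is the special case that $\vf^{-1}(\bar\g)\ne\varnothing$ exactly when $\bar\g\in\Im\vf$.

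The only genuine labor I anticipate is the convention-sensitive bookkeeping in the reverse direction: checking that the explicitly described set $H$ is a subgroup, and computing $H\cap M$ and $HM$ correctly with the chosen side conventions for the action and the cocycle. The cohomological content — facts (i) and (ii) above — is routine, so the main care needed is purely in the group-theoretic verification.
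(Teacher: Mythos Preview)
Your argument is correct and is essentially the same as the paper's: both directions proceed by the standard cocycle computation, choosing a section landing in $H$ for the forward inclusion and, for the converse, adjusting by a coboundary to obtain an $L$-valued representative and then reading off $H$. The only cosmetic difference is that in the reverse direction the paper passes to the explicit ``pairs'' model $E=\{(g,m):g\in G,\ m\in M\}$ with multiplication twisted by the $L$-valued cocycle and sets $H=\{(g,m):m\in L\}$, whereas you stay inside the given $E$ via the modified section $s'$ and take $H=L\cdot s'(G)$; these are the same construction in different coordinates.
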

\begin{proof} Let $H$ be a required subextension of $E$. Choose a transversal $\t: G \to H$ of $L$ in $H$. Then,
for all $g_1,g_2\in G$, we have
$\t(g_1)\t(g_2)=\t(g_1g_2)\beta(g_1,g_2)$ for a $2$-cocycle $\beta\in Z^2(G,L)$ and the element $\bar\beta=\beta+B^2(G,L)$ of $H^2(G,L)$ defines $H$. Let $\g$ be the composition of $\beta$ with the embedding (\ref{emb}). Then $\g\in Z^2(G,M)$ arises from the same transversal $\t$ (composed with the embedding $H\to E$), hence $\g+B^2(G,M)$ is the element of $H^2(G,M)$ that defines $E$ which is $\bar\gamma$. Therefore, $\vf(\bar\beta)=\bar\gamma$.

Conversely, let $\vf(\bar\beta)=\bar\gamma$ for some $\bar\beta\in H^2(G,L)$. Then there is a representative $2$-cocycle $\gamma \in Z^2(G,M)$ whose values lie in $L$ and which, when viewed as a map $G\times G\to L$, is a $2$-cocycle $\beta\in Z^2(G,L)$ representative for $\bar\beta$. Now $E$ can be identified with the set of pairs $(g,m)$ with $g\in G$, $m\in M$
subject to the multiplication
$$
(g_1,m_1)(g_2,m_2)=(g_1g_2,m_1\cdot g_2+m_2+\beta(g_1,g_2))
$$
and, if we set $H=\{(g,m)\mid g\in G, m\in L \}$, then $H$ is clearly a subextension of $E$ defined by $\bar\beta$.
\end{proof}

It is known that the zero element of $H^2(G,M)$ defines the split extension (which fact is also a particular case of Lemma \ref{bas} with $L=0$). Therefore, we have

\begin{cor} \label{cors} Let $L,M$ be $G$-modules as above and let  $E$ be the split extension of $M$ by $G$. Then the subextensions of $E$ that correspond to the embedding (\ref{emb}) are defined by the elements of $\,\Ker\vf$, where $\vf$ is the induced homomorphism {\rm (\ref{hhom})}.
\end{cor}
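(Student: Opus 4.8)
The plan is to derive this immediately from Lemma~\ref{bas} once we pin down the cohomology class that defines the split extension $E$. First I would invoke the standard fact that the zero element of $H^2(G,M)$ corresponds to the split extension of $M$ by $G$; alternatively, and more in keeping with the present section, one can recover this as the special case $L=0$ of Lemma~\ref{bas} itself. Indeed, a subextension $H\le E$ corresponding to the embedding $0\to M$ is precisely a complement to $M$ in $E$, since it satisfies $H\cap M=0$ and $HM=E$, so such an $H$ exists exactly when $E$ splits. On the other hand, for $L=0$ the induced map $\vf$ of~\ref{hhom} has domain $H^2(G,0)=0$, so by Lemma~\ref{bas} a subextension exists iff $\bar\g\in\Im\vf=\{0\}$. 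Hence $E$ is the split extension if and only if $\bar\g=0$.

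With $\bar\g=0$ in hand I would simply apply Lemma~\ref{bas} to the extension $E$: the set of elements of $H^2(G,L)$ that define the subextensions of $E$ corresponding to the embedding~\ref{emb} is $\vf^{-1}(\bar\g)=\vf^{-1}(0)$, and since $\vf$ is a homomorphism of abelian groups this preimage is exactly $\Ker\vf$. That is the assertion.

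I do not expect a genuine obstacle here, as the corollary is a formal consequence of Lemma~\ref{bas}. The only points deserving a moment's care are the identification of the split extension with the zero class (handled above, either by citing the standard fact or by the $L=0$ instance of the lemma) and the observation that $\Ker\vf$ is the \emph{full} preimage of $0$, so that the resulting parametrization of the subextensions is exhaustive and not merely a statement of nonemptiness.
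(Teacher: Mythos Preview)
Your argument is correct and matches the paper's own reasoning: the paper remarks just before the corollary that the split extension corresponds to the zero element of $H^2(G,M)$ (noting this is also the $L=0$ case of Lemma~\ref{bas}), and then the corollary is immediate from Lemma~\ref{bas} via $\vf^{-1}(0)=\Ker\vf$. There is nothing to add.
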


\section{Notation and auxiliary results}

Basic facts of homological algebra can be found in \cite{gr,we}. For abelian groups $A$ and $B$, we denote $\Hom(A,B) = \Hom_\ZZ(A,B)$ and $\Ext(A,B) = \Ext_\ZZ^1(A,B)$.

\begin{lem}[The Universal Coefficient Theorem for Cohomology, {\cite[Theorem 3]{gr}}] \label{ucoe} For all $i\ge 1$ and every trivial $G$-module $A$,
$$
H^i(G,A)\cong\Hom(H_i(G,\ZZ),A)\oplus\Ext(H_{i-1}(G,\ZZ),A).
$$
\end{lem}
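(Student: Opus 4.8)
The plan is to reduce the statement to the purely homological-algebraic Universal Coefficient Theorem for a chain complex of free abelian groups, and then to prove the latter by the standard splitting argument available over the PID $\ZZ$. First I would fix a free resolution $P_\bullet\to\ZZ$ of the trivial $\ZZ G$-module $\ZZ$ by free $\ZZ G$-modules. By definition, the group homology is then computed as $H_i(G,\ZZ)=H_i(C_\bullet)$, where $C_\bullet=P_\bullet\ot_{\ZZ G}\ZZ$ is the complex of coinvariants, while the group cohomology is computed as $H^i(G,A)=H^i(\Hom_{\ZZ G}(P_\bullet,A))$.

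The key observation that links the two is the trivial-coefficient identification. Since $A$ is a trivial $G$-module, the tensor--Hom adjunction furnishes a natural isomorphism of cochain complexes
\[
\Hom_{\ZZ G}(P_\bullet,A)\cong\Hom_\ZZ(P_\bullet\ot_{\ZZ G}\ZZ,\,A)=\Hom_\ZZ(C_\bullet,A);
\]
concretely, every $\ZZ G$-homomorphism into a trivial module factors uniquely through the coinvariants. Hence $H^i(G,A)\cong H^i(\Hom_\ZZ(C_\bullet,A))$. Moreover, since each $P_i$ is free over $\ZZ G$, each $C_i$ is free over $\ZZ$, so $C_\bullet$ is a complex of free abelian groups whose $i$-th homology is $H_i(G,\ZZ)$. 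This reduces the lemma to the following algebraic statement: for any complex $C_\bullet$ of free abelian groups there is, for each $i\ge 1$, a short exact sequence
\[
0\to\Ext(H_{i-1}(C_\bullet),A)\to H^i(\Hom_\ZZ(C_\bullet,A))\to\Hom(H_i(C_\bullet),A)\to 0
\]
that splits, the direct sum decomposition being an immediate consequence.

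To prove this algebraic statement I would introduce the cycles $Z_i=\Ker(\partial\colon C_i\to C_{i-1})$ and boundaries $B_i=\Im(\partial\colon C_{i+1}\to C_i)$, and consider the two short exact sequences $0\to Z_i\to C_i\to B_{i-1}\to 0$ and $0\to B_i\to Z_i\to H_i(C_\bullet)\to 0$. The crucial point, and the place where the PID hypothesis is used, is that $B_{i-1}$ and $Z_i$, being subgroups of the free abelian groups $C_{i-1}$ and $C_i$, are themselves free; consequently the first sequence splits, and applying $\Hom_\ZZ(-,A)$ produces a split short exact sequence of cochain groups $0\to\Hom(B_{i-1},A)\to\Hom(C_i,A)\to\Hom(Z_i,A)\to 0$. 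Applying $\Hom_\ZZ(-,A)$ to the second sequence and using $\Ext(Z_i,A)=0$ (again by freeness of $Z_i$) yields the four-term exact sequence $0\to\Hom(H_i,A)\to\Hom(Z_i,A)\to\Hom(B_i,A)\to\Ext(H_i,A)\to 0$.

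The final step is a diagram chase that reads the cohomology of $\Hom_\ZZ(C_\bullet,A)$ off these pieces: one identifies the cocycles in degree $i$ with $\Hom(Z_i,A)$ and the coboundaries with the image of $\Hom(B_{i-1},A)$, so that $H^i(\Hom_\ZZ(C_\bullet,A))$ appears as an extension of $\Hom(H_i,A)$ by $\Ext(H_{i-1},A)$; the splitting of the first of the two short exact sequences above then supplies the (non-canonical) splitting of this extension. I expect this bookkeeping---tracking how the dualized cycle-and-boundary sequences assemble into the cohomology of the total complex---to be the main technical obstacle, since the reduction from group cohomology to a free $\ZZ$-complex is formal once the trivial-coefficient adjunction is in hand.
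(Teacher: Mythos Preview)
Your sketch is correct and follows the standard route to the Universal Coefficient Theorem: pass to coinvariants via the adjunction $\Hom_{\ZZ G}(P_\bullet,A)\cong\Hom_\ZZ(P_\bullet\ot_{\ZZ G}\ZZ,A)$ for trivial $A$, then run the cycle/boundary splitting argument over the PID~$\ZZ$. One small point of phrasing: the identification of cocycles with $\Hom(Z_i,A)$ is not literal; the clean way to finish is to regard $0\to Z_\bullet\to C_\bullet\to B_{\bullet-1}\to 0$ as a short exact sequence of complexes with zero differential on the outer terms, dualise, and read off the desired short exact sequence from the resulting long exact sequence together with your four-term $\Ext$ sequence. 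This is presumably the ``bookkeeping'' you flagged, and it goes through without difficulty.

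As for comparison with the paper: there is nothing to compare. The paper does not prove this lemma; it is quoted verbatim as \cite[Theorem~3]{gr} and used as a black box in Lemmas~\ref{2hom}, \ref{hgv}, and~\ref{h1}. Your argument simply supplies the proof that the paper outsources to the reference.
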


\begin{lem}[Shapiro's lemma, {\cite[\S 6.3]{we}}] \label{shap} Let $H\le G$ with $|G:H|$ finite. If $V$ is an $H$-module and $i\ge 0$ then $H^i(G,V^G)\cong H^i(H,V)$, where $V^G$ is the induced $G$-module.
\end{lem}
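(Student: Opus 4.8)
The plan is to compute both sides from a single projective resolution and to connect them through the adjunction between restriction and coinduction. First I would fix a projective resolution $P_\bullet\to\ZZ$ of the trivial $\ZZ G$-module by projective $\ZZ G$-modules, so that $H^i(G,W)=H^i(\Hom_{\ZZ G}(P_\bullet,W))$ for every $G$-module $W$. Writing $\mathrm{Coind}_H^G V=\Hom_{\ZZ H}(\ZZ G,V)$ for the coinduced module and $\mathrm{Res}$ for restriction of scalars to $\ZZ H$, the key input is the natural isomorphism (Frobenius reciprocity)
$$
\Hom_{\ZZ G}(P,\mathrm{Coind}_H^G V)\cong\Hom_{\ZZ H}(\mathrm{Res}\,P,V),
$$
which expresses $\mathrm{Coind}_H^G$ as the right adjoint of the exact functor $\mathrm{Res}$. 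Being natural in $P$, it yields an isomorphism of cochain complexes $\Hom_{\ZZ G}(P_\bullet,\mathrm{Coind}_H^G V)\cong\Hom_{\ZZ H}(\mathrm{Res}\,P_\bullet,V)$.

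Next I would check that $\mathrm{Res}\,P_\bullet\to\ZZ$ is again a projective resolution, now of the trivial $\ZZ H$-module. Restriction is exact, so the complex remains exact; and since $\ZZ G$ is free as a $\ZZ H$-module, the restriction of a projective $\ZZ G$-module is a projective $\ZZ H$-module. Hence the right-hand complex computes $H^i(H,V)$, and taking cohomology of the complex isomorphism gives $H^i(G,\mathrm{Coind}_H^G V)\cong H^i(H,V)$.

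Finally I would use the hypothesis $|G:H|<\infty$ to pass from the coinduced to the induced module: for finite index there is a natural $\ZZ G$-isomorphism $V^G=\mathrm{Ind}_H^G V=\ZZ G\ot_{\ZZ H}V\cong\mathrm{Coind}_H^G V$, so the displayed isomorphism becomes $H^i(G,V^G)\cong H^i(H,V)$, as required. The step deserving the most care is the adjunction isomorphism and its naturality in $P$, on which everything else rests; the identification $\mathrm{Ind}\cong\mathrm{Coind}$ is the only point where finiteness of the index is used and should be stated explicitly. Alternatively, one can avoid choosing a resolution: $\mathrm{Coind}_H^G$ is exact (as $\ZZ G$ is $\ZZ H$-free) and, being right adjoint to the exact functor $\mathrm{Res}$, preserves injectives, while the composite of the fixed-point functor $(-)^G$ with $\mathrm{Coind}_H^G$ equals $(-)^H$; the Grothendieck composite-functor formula then gives $H^i(G,\mathrm{Coind}_H^G V)\cong H^i(H,V)$ immediately.
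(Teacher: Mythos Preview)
Your argument is correct and is exactly the standard proof of Shapiro's lemma as given in the reference the paper cites. Note, however, that the paper does not supply its own proof of this lemma at all: it merely states the result and refers to \cite[\S 6.3]{we}. So there is nothing to compare against beyond observing that your proof matches the cited source.
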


Given a group $G$, we denote by $M(G)$ the Schur multiplier of $G$.
If $A$ is a finite abelian group and $p$ a prime then $A_{(p)}$ denotes the $p$-primary component of $A$.

\begin{lem}\cite[Theorem 25.1]{hu} \label{tim} Let $G$ be a finite group, $p$ a prime, and let $P\in\Syl_p(G)$.
Then $M(G)_{(p)}$ is isomorphic to a subgroup of $M(P)$.
\end{lem}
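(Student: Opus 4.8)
The plan is to prove this by the classical transfer argument: restriction of (co)homology to a Sylow $p$-subgroup is injective on $p$-primary components, because the index is invertible modulo $p$. I would work with the Schur multiplier in the form $M(G)\cong H^2(G,\CC^\times)$ with trivial coefficients. Since $\CC^\times$ is divisible, hence an injective $\ZZ$-module, the $\Ext$-term in the Universal Coefficient Theorem (Lemma~\ref{ucoe}) vanishes, so $M(G)\cong\Hom(H_2(G,\ZZ),\CC^\times)$; as $H_2(G,\ZZ)$ is finite this yields $M(G)\cong H_2(G,\ZZ)$, and in particular $M(G)$ and $M(P)$ are finite abelian groups. Thus it suffices to exhibit an injective homomorphism from the $p$-primary part $M(G)_{(p)}$ into $M(P)\cong H^2(P,\CC^\times)$.

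For a subgroup $H\le G$ of finite index one has the restriction map $\mathrm{res}^G_H\colon H^n(G,A)\to H^n(H,A)$, induced by inclusion of cochains, and the corestriction (transfer) map $\mathrm{cor}^G_H\colon H^n(H,A)\to H^n(G,A)$, obtained by averaging over a transversal, and their composite $\mathrm{cor}^G_H\circ\mathrm{res}^G_H$ is multiplication by $[G:H]$ on $H^n(G,A)$. I would take $H=P$, $A=\CC^\times$ and $n=2$. On the finite abelian $p$-group $M(G)_{(p)}$, multiplication by $[G:P]$ is an automorphism since $p\nmid[G:P]$; hence the restriction of $\mathrm{res}^G_P$ to $M(G)_{(p)}$ is injective, and its image is a subgroup of $H^2(P,\CC^\times)\cong M(P)$. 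This proves the lemma.

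The one ingredient that is not purely formal is the construction of the corestriction map together with the identity $\mathrm{cor}^G_P\circ\mathrm{res}^G_P=[G:P]\cdot\mathrm{id}$; this is entirely standard and can be quoted from \cite{gr,we}, so I would simply invoke it rather than reconstruct it. Everything else — the identification $M(G)\cong H_2(G,\ZZ)$ via Lemma~\ref{ucoe}, and the invertibility of a $p'$-integer acting on a $p$-group — is routine, so I anticipate no genuine obstacle once the transfer formula is on hand.
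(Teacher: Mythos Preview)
Your argument is correct and is precisely the classical transfer proof: restriction--corestriction gives $\mathrm{cor}^G_P\circ\mathrm{res}^G_P=[G:P]\cdot\mathrm{id}$ on $H^2(G,\CC^\times)$, and since $[G:P]$ is prime to $p$ this forces $\mathrm{res}^G_P$ to be injective on the $p$-primary part. Note that the paper does not supply its own proof of this lemma; it simply quotes it as \cite[Theorem~25.1]{hu}, whose proof is exactly the transfer argument you have outlined.
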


\section{Projective action of $\PSL_2(q)$}

We denote $G=\PSL_2(q)$ for $q$ odd. Let $\P$ be the projective line over $\FF_q$ and let $V$ be the permutation $\FF_2G$-module that corresponds to the natural action of $G$ on $\P$. The sum of the basis vectors of $V$, which are permuted by $G$, spans a 1-dimensional submodule $I$, and we have the exact sequence
\begin{equation}\label{esp}
0\to I \to V \to W \to 0,
\end{equation}
where $W\cong V/I$. The following result clarifies the composition structure of the module $V$.
Let $k=\ov{\FF}_2$ be the algebraic closure of $\FF_2$.
\begin{lem}[{\cite[Lemma 1.6]{bu}}] \label{bur} In the notation above, $I$ is the unique minimal submodule of $V$ and $W$ has a unique maximal submodule $U$  such that
\begin{equation}\label{bst}
0\to U \to W \to I \to 0
\end{equation}
is a nonsplit short exact sequence. Moreover, $U\otimes k = U_{+}\oplus U_{-}$, where $U_{+}$ and $U_{-}$ are the two nontrivial absolutely irreducible  $kG$-modules in the principal $2$-block of $G$.
\end{lem}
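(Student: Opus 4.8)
The plan is to analyze the module $V$ by restricting to a point stabilizer and applying Shapiro's lemma. The natural action of $G=\PSL_2(q)$ on $\P$ is transitive with point stabilizer $B$ a Borel subgroup, so $V$ is the induced module $(\FF_2)_B^G$ where $\FF_2$ denotes the trivial $B$-module. This already explains why $I$ is a submodule (the trivial module always embeds in an induced module of the trivial module, via the norm/sum element), and by duality why $W$ has a trivial quotient; the content is uniqueness of $I$ as a minimal submodule, nonsplitness of \eqref{bst}, and the semisimple structure of $U\otimes k$.

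First I would establish that $\dim_{\FF_2} V = q+1$, that $\soc(V)$ is the one-dimensional module $I$, and that $V$ is uniserial near the bottom. For this, Frobenius reciprocity gives $\Hom_{\FF_2G}(\FF_2, V) \cong \Hom_{\FF_2B}(\FF_2,\FF_2) = \FF_2$, so the trivial module occurs with multiplicity one in the socle; showing it is the whole socle amounts to showing no nontrivial irreducible $kG$-module (equivalently, none of $U_+$, $U_-$, or any Steinberg-type constituent) embeds in $V\otimes k$, which one reads off from the known $2$-modular character table of $\PSL_2(q)$ or from the Brauer characters of $\PSL_2(q)$ in characteristic $2$. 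Dually, $V$ has a unique maximal submodule-complement giving the trivial quotient, yielding \eqref{esp} with $W = V/I$ and then, since $\dim W = q$ and the trivial quotient of $W$ is one-dimensional, the sequence \eqref{bst} with $\dim U = q-1$. Nonsplitness of \eqref{bst} follows because a splitting would force $V$ to have the trivial module as a direct summand, contradicting that $\soc(V)=I$ is essential (or, directly, that $V$ is an indecomposable $\FF_2G$-module since $\dim\Hom_{\FF_2G}(V,V) = \dim\Hom_{\FF_2B}(\FF_2, V{\downarrow_B})$ can be computed to be small).

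Next I would identify $U\otimes k$. The module $U$ has dimension $q-1$, which is exactly $\frac{q-1}{2}+\frac{q-1}{2}$, and $U_+$, $U_-$ are the two irreducible $kG$-modules of dimension $\frac{q-1}{2}$ lying in the principal block (they are the two ``half'' constituents that become fused or split according to $q \bmod 4$; over $k$ they are always distinct absolutely irreducible modules). One shows $U\otimes k$ is semisimple — equivalently $\Ext^1_{kG}(\FF_2,U\otimes k)$-related self-extensions vanish in the relevant spot — by a block/dimension count: the only candidate constituents of the right total dimension $q-1$ in the principal block are $U_+ \oplus U_-$, and semisimplicity of $U\otimes k$ follows since $U$ sits as the heart of the uniserial stretch of the self-dual module $V\otimes k$, forcing $\rad(U)=\soc(U)=U$. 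Here self-duality of $V$ (the permutation module on $\P$ is self-dual, the action being by a symmetric-ish permutation representation — indeed any permutation module over a field is self-dual) is the key structural input that both pins down the shape $\FF_2 \mid U \mid \FF_2$ of $V\otimes k$ and forces the middle layer to be semisimple.

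The main obstacle I expect is the semisimplicity claim $U\otimes k \cong U_+\oplus U_-$ rather than a nonsplit extension of one of the $U_\pm$ by the other: a priori the heart of a uniserial module of this shape need not be semisimple. The clean way around it is to invoke self-duality of $V\otimes k$ together with the fact that $U_+$ and $U_-$ are non-isomorphic (so $\Ext^1_{kG}(U_+,U_-)$ and $\Ext^1_{kG}(U_-,U_+)$ being possibly nonzero is not by itself an obstruction once one knows $U$ is a submodule of a self-dual module whose socle and radical layers are already accounted for) — more precisely, if $U\otimes k$ were a nonsplit extension $0\to U_+\to U\otimes k\to U_-\to 0$, then applying the duality functor to $V\otimes k$ would produce a submodule structure incompatible with $\soc(V\otimes k)=\FF_2$ being simple. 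Alternatively one quotes the known Loewy structure of the permutation module of $\PSL_2(q)$ on the projective line in characteristic dividing neither $q+1$ nor $q-1$ wait — here $2\mid q\pm1$, so the relevant reference is precisely the analysis in \cite{bu}; since the statement is cited from there, I would at this point simply appeal to that source for the fine structure, having verified the dimension bookkeeping above.
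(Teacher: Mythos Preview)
The paper does not prove this lemma; it is quoted verbatim from \cite[Lemma~1.6]{bu} and used as a black box. There is therefore no proof in the paper to compare your attempt against.

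As for your sketch itself: the skeleton is right (Frobenius reciprocity to see $\soc(V)\supseteq I$, self-duality of the permutation module to control the head, dimension count to identify the composition factors of $U\otimes k$), but two steps are not actually closed. First, your nonsplitness argument for \eqref{bst} says a splitting would make $I$ a direct summand of $V$; that does not follow directly. A splitting gives $I\hookrightarrow W=V/I$, hence a $2$-dimensional submodule $M\le V$ with trivial composition factors; you then need $H^1(G,\FF_2)=\Hom(G,\FF_2)=0$ (true since $G$ is perfect for $q>3$, and the $q=3$ case is tiny) to conclude $M\cong I\oplus I$, contradicting $\dim\soc(V)=1$. Second, the semisimplicity of $U\otimes k$ is the genuinely delicate point, and your self-duality argument does not settle it: if $U_+$ and $U_-$ were swapped by duality, a uniserial $U\otimes k$ with layers $U_+/U_-$ could still sit inside a self-dual $V\otimes k$. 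One really needs either the explicit Ext-computations between $U_+$, $U_-$, and the trivial module in the principal $2$-block, or the detailed analysis in \cite{bu} that you end up invoking anyway. Since the paper itself simply cites \cite{bu}, deferring to that source is exactly what is expected here.
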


Using the knowledge of the Schur multiplier of $G$ we can determine $H^2(G,I)$.
\begin{lem} \label{2hom}
Let $q$ be an odd prime power. For $\,\PSL_2(q)$ acting trivially on $\ZZ_2$, we have
$$
H^2(\PSL_2(q), \ZZ_2)\cong \ZZ_2.
$$
\end{lem}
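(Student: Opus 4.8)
The plan is to read off $H^2(\PSL_2(q),\ZZ_2)$ from the Universal Coefficient Theorem (Lemma~\ref{ucoe}). Writing $G=\PSL_2(q)$ and taking $i=2$, $A=\ZZ_2$ there, and using the standard identifications $H_1(G,\ZZ)\cong G/[G,G]$ and $H_2(G,\ZZ)\cong M(G)$, one gets
\[
H^2(G,\ZZ_2) \cong \Hom(M(G),\ZZ_2) \oplus \Ext(G/[G,G],\ZZ_2).
\]
The second summand is trivial: if $q>3$ then $G$ is a nonabelian simple group, so $G/[G,G]=1$; if $q=3$ then $G\cong A_4$, so $G/[G,G]\cong\ZZ_3$ and $\Ext(\ZZ_3,\ZZ_2)=0$ since $\gcd(2,3)=1$. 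Thus everything reduces to showing $\Hom(M(G),\ZZ_2)\cong\ZZ_2$.

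To bound this group, I would invoke Lemma~\ref{tim}: for $P\in\Syl_2(G)$ the group $M(G)_{(2)}$ embeds into $M(P)$, and $\Hom(M(G),\ZZ_2)$ depends only on $M(G)_{(2)}$. Since $q$ is odd, a Sylow $2$-subgroup of $\SL_2(q)$ is generalized quaternion of order $\ge 8$ (namely the $2$-part of $q^2-1$), so its image $P$ in $G$ is dihedral of order $\ge 4$, a Klein four-group when that order equals $4$; in each of these cases the Schur multiplier is isomorphic to $\ZZ_2$. Hence $M(G)_{(2)}$ is cyclic of order at most $2$ and $|\Hom(M(G),\ZZ_2)|\le 2$. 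For the opposite inequality it suffices to produce one nonzero element of $H^2(G,\ZZ_2)$, i.e.\ one nonsplit central extension of $G$ by a group of order $2$; and $\SL_2(q)$, whose center has order $2$, is such an extension: were it split, $G$ would embed into $\SL_2(q)$ and therefore have at most one involution, the Sylow $2$-subgroups of $\SL_2(q)$ being generalized quaternion --- but $\PSL_2(q)$ has more than one involution for every odd $q$. (Equivalently, the center of $\SL_2(q)$ lies in $[\SL_2(q),\SL_2(q)]$ for all odd $q$, so $\SL_2(q)\to G$ is a stem extension and $\ZZ_2$ is a homomorphic image of $M(G)$.) Combining the two bounds gives $H^2(G,\ZZ_2)\cong\Hom(M(G),\ZZ_2)\cong\ZZ_2$.

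The genuinely non-formal input is the pair of facts about the $2$-primary part of $M(G)$: that a Sylow $2$-subgroup of $\PSL_2(q)$ is a Klein four-group or a dihedral $2$-group with Schur multiplier $\ZZ_2$, and that $\SL_2(q)\to\PSL_2(q)$ does not split. Both are classical; alternatively one may simply quote the known values $M(\PSL_2(q))\cong\ZZ_2$ for odd $q\neq 9$ and $M(\PSL_2(9))=M(A_6)\cong\ZZ_6$, observe that the $2$-primary part is $\ZZ_2$ in either case, and feed this into the Universal Coefficient Theorem.
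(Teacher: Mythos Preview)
Your proof is correct and follows the same overall route as the paper: apply the Universal Coefficient Theorem, dispose of the $\Ext$ summand via $G/[G,G]\in\{1,\ZZ_3\}$, and then identify $\Hom(M(G),\ZZ_2)$. The only difference is in this last step: the paper simply quotes the known values $M(\PSL_2(q))\cong\ZZ_2$ for $q\ne 9$ and $\ZZ_6$ for $q=9$ (your closing ``alternatively'' remark), whereas your primary argument bounds $M(G)_{(2)}$ from above via Lemma~\ref{tim} and the dihedral Sylow $2$-subgroup, and from below via the nonsplit extension $\SL_2(q)$. Your variant is a little more self-contained, the paper's a little shorter; both are fine.
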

\begin{proof} Applying Lemma \ref{ucoe} for the trivial action of $G=\PSL_2(q)$ on $\ZZ_2$, we have
\begin{equation}\label{uct}
H^2(G,\ZZ_2)\cong\Hom(H_2(G,\ZZ),\ZZ_2)\oplus\Ext(H_1(G,\ZZ),\ZZ_2).
\end{equation}
Since $H_2(G,\ZZ)=M(G)$, according to \cite[Theorem 25.7]{hu}, we have
$$
H_2(G,\ZZ)=\left\{
             \begin{array}{rl}
               \ZZ_2, & q\ne 9, \\
               \ZZ_6, & q=9.
             \end{array}
           \right.
$$
It follows that $\Hom(H_2(G,\ZZ),\ZZ_2)\cong\ZZ_2$. Since the first integral homology group $H_1(G,\ZZ)$ is isomorphic to the abelianization $G/G'$, we have
$$
H_1(G,\ZZ)=\left\{
             \begin{array}{rl}
               0, & q\ne 3, \\
               \ZZ_3, & q=3.
             \end{array}
           \right.
$$
Therefore, we always have $\Ext(H_1(G,\ZZ),\ZZ_2)=0$, and the claim follows.
\end{proof}

We now determine the group $H^2(G,V)$.
\begin{lem} \label{hgv} Let $V$ be the above-defined permutation module. Then we have
$$
H^2(G,V)\cong\left\{
             \begin{array}{rl}
               0, & q\equiv -1\pmod 4, \\
               \ZZ_2, & q\equiv 1\pmod 4.
             \end{array}
           \right.
$$
\end{lem}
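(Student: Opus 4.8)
The plan is to recognize $V$ as an induced module, reduce the computation to the cohomology of a Borel subgroup by Shapiro's lemma, and then collapse that to the cohomology of a cyclic group. Concretely, $G$ acts transitively on $\P$ and the stabilizer of a point is a Borel subgroup $B$ of $G$ --- the image in $G$ of the group of upper-triangular matrices of $\SL_2(q)$, of index $|G:B|=q+1$. Hence the permutation module $V=\FF_2[\P]$ is isomorphic to the $G$-module $(\FF_2)^G$ induced from the trivial $B$-module $\FF_2$, and Lemma~\ref{shap} gives $H^2(G,V)\cong H^2(B,\FF_2)$. It thus remains to compute $H^2(B,\FF_2)$.

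Next I would use the structure $B=U\rtimes T$, where $U$ is the image of the group of upper-triangular unipotent matrices, an elementary abelian group of order $q$, and $T$ is the image of the diagonal torus, a cyclic group of order $(q-1)/2$. Since $q$ is odd, $|U|$ is coprime to $2$, so $H^i(U,\FF_2)=0$ for all $i\ge 1$, while $H^0(U,\FF_2)=\FF_2$ with trivial $T$-action. Therefore the Lyndon--Hochschild--Serre spectral sequence of $1\to U\to B\to T\to 1$ collapses at the $E_2$-page, and inflation gives an isomorphism $H^n(B,\FF_2)\cong H^n(T,\FF_2)$ for every $n$; in particular $H^2(B,\FF_2)\cong H^2(T,\FF_2)$.

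Finally, for a cyclic group $T$ of order $m$ acting trivially on $\FF_2$ one has $H^2(T,\FF_2)\cong\ZZ_{\gcd(m,2)}$, either by the $2$-periodicity of the cohomology of cyclic groups or directly from Lemma~\ref{ucoe} (using $H_2(T,\ZZ)=0$ and $H_1(T,\ZZ)=\ZZ_m$). With $m=(q-1)/2$, this group is trivial exactly when $(q-1)/2$ is odd, i.e. $q\equiv -1\pmod 4$, and equals $\ZZ_2$ exactly when $(q-1)/2$ is even, i.e. $q\equiv 1\pmod 4$. Chaining the isomorphisms of the previous two paragraphs then yields the stated values of $H^2(G,V)$.

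The argument is essentially routine; the steps needing the most care are the first --- correctly identifying the point stabilizer as a Borel subgroup so that $V$ really is the corresponding induced module and Shapiro's lemma applies --- and the collapse of the spectral sequence, which is precisely where the hypothesis that $q$ is odd (so that $|U|$ is prime to $2$) enters decisively.
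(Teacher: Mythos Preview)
Your argument is correct. The overall strategy---recognize $V$ as induced from the trivial module of the point stabilizer $B$ and apply Shapiro's lemma to reduce to $H^2(B,\FF_2)$---is the same as the paper's. The difference lies in how $H^2(B,\FF_2)$ is computed. You pass to the torus $T\cong\ZZ_{(q-1)/2}$ via the Lyndon--Hochschild--Serre spectral sequence (which collapses because the unipotent radical has odd order), and then invoke the periodicity of the cohomology of cyclic groups; this handles both congruence classes of $q$ uniformly. The paper instead works directly with $B$: for $q\equiv -1\pmod 4$ it simply observes that $|B|$ is odd, while for $q\equiv 1\pmod 4$ it applies the Universal Coefficient Theorem to $B$ itself, using that the $2$-part of the Schur multiplier of $B$ vanishes (since a Sylow $2$-subgroup of $B$ is cyclic) to kill the $\Hom$ term and then computing the $\Ext$ term from $H_1(B,\ZZ)\cong\ZZ_{(q-1)/2}$. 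Your route is a bit more streamlined and avoids the Schur-multiplier bound; the paper's route avoids spectral sequences. Both are standard and reach the same endpoint.
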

\begin{proof} Since the action of $G$ on $\P$ is transitive, we have $V\cong T^G$, where $T$ is the principal  $\FF_2H$-module for a point stabilizer $H\le G$. By Lemma \ref{shap},  $H^2(G,V)\cong H^2(H,T)$. We have $H\cong \FF_q\leftthreetimes\ZZ_{(q-1)/2}$, a Frobenius group. If $q\equiv -1\pmod 4$, the order $|H|$ is odd. By the Schur-Zassenhaus theorem, every extension of a $2$-group by $H$ splits, which yields $H^2(H,T)=0$. Suppose that $q\equiv 1\pmod 4$. Let $P\in \Syl_2(H)$. Lemma \ref{tim} implies that $H_2(H,\ZZ)_{(2)}$ is a subgroup of $H_2(P,\ZZ)$ which is $0$, since cyclic groups have trivial Schur multiplier. Therefore,
$$
\Hom(H_2(H,\ZZ),\ZZ_2)=\Hom(H_2(H,\ZZ)_{(2)},\ZZ_2)=0.
$$
Note also that $H_1(H,\ZZ)=\ZZ_{(q-1)/2}$.
Now, $H$ acts trivially on $T\cong \ZZ_2$, so we can use again the universal coefficient formula (\ref{uct}) to obtain
$$
H^2(H,T)=\Ext(\ZZ_{(q-1)/2},\ZZ_2)\cong \ZZ_2,
$$
since $(q-1)/2$ is even by assumption. This completes the proof.
\end{proof}

\section{Proof of Theorem \ref{main}}
Given a permutation representation $\r$ of $G=\PSL_2(q)$ as in the introduction, observe that the permutation wreath product $E=\ZZ_2 \wr_\r G$ is isomorphic to
the split extension of the permutation $\FF_2G$-module $V$ by $G$. Since $\SL_2(q)$ is an extension of the principal $\FF_2G$ module $I$ by $G$, and $I$ is a unique minimal submodule of $V$ by Lemma \ref{bur}, it follows that
Problem \ref{mpr} is equivalent to the question of whether $\SL_2(q)$ is a subextension of $E$
that corresponds to the embedding $I\to V$. Corollary \ref{cors} implies that such subextensions are defined by the elements of $\Ker\vf$, where $\vf$ is the induced homomorphism
\begin{equation}\label{fiv}
H^2(G,I)\stackrel{\varphi}{\to} H^2(G,V).
\end{equation}
For $q\equiv -1\pmod 4$, we have $H^2(G,V)=0$ by Lemma \ref{hgv}. Thus, $\Ker\vf=H^2(G,I)\cong \ZZ_2$ by Lemma \ref{2hom}. The nonidentity element of $H^2(G,I)$ defines the unique nonsplit extension $\SL_2(q)$ of $I$ by $G$, which is therefore a subextension of $E$. The proof is complete.

\section{Reduction in the case $q\equiv 1\pmod 4$}
The above argument does not clarify what $\Ker\vf$ is if $q\equiv 1\pmod 4$, because in this case $H^2(G,V)\cong \ZZ_2$ by Lemma \ref{hgv}. We will consider the long sequence
\begin{equation}\label{liv}
H^1(G,I)\to H^1(G,V)  \to H^1(G,W) \stackrel{\delta}{\to} H^2(G,I)\stackrel{\varphi}{\to} H^2(G,V)
\end{equation}
induced by (\ref{esp}). Since this sequence is exact, it follows that $\Im\delta=\Ker\vf$, and we might as well study the connecting homomorphism $\delta$.
\begin{lem} \label{h1} In the above notation, we have
\begin{enumerate}
  \item[$(i)$] $H^1(G,I)=0$;
  \item[$(ii)$] $H^1(G,V)\cong \left\{
             \begin{array}{rl}
               0, & q\equiv -1\pmod 4, \\
               \ZZ_2, & q\equiv 1\pmod 4.
             \end{array}
           \right.$
\end{enumerate}
\end{lem}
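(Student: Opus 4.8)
The plan is to handle the two parts separately, using the universal coefficient theorem (Lemma \ref{ucoe}) for $(i)$ and Shapiro's lemma (Lemma \ref{shap}) for $(ii)$.

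For $(i)$, the module $I$ is the principal $\FF_2G$-module $\ZZ_2$ with trivial action, so Lemma \ref{ucoe} with $i=1$ gives $H^1(G,\ZZ_2)\cong\Hom(H_1(G,\ZZ),\ZZ_2)\oplus\Ext(H_0(G,\ZZ),\ZZ_2)$. The second summand vanishes since $H_0(G,\ZZ)=\ZZ$ is free, and the first equals $\Hom(G/G',\ZZ_2)$ since $H_1(G,\ZZ)\cong G/G'$. For $q\ge 5$ the group $G=\PSL_2(q)$ is simple nonabelian, so $G/G'=1$; for $q=3$ we have $G\cong A_4$ with $G/G'\cong\ZZ_3$. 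In all cases $\Hom(G/G',\ZZ_2)=0$, whence $H^1(G,I)=0$.

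For $(ii)$, as in the proof of Lemma \ref{hgv} I would write $V\cong T^G$, where $T\cong\ZZ_2$ is the principal $\FF_2H$-module for a point stabilizer $H\le G$, so that $H\cong \FF_q\leftthreetimes\ZZ_{(q-1)/2}$ is a Frobenius group. Shapiro's lemma gives $H^1(G,V)\cong H^1(H,T)=\Hom(H,\ZZ_2)$, the last equality because $H$ acts trivially on $T$. The Frobenius kernel $\FF_q$ is a $p$-group with $p$ odd (here $q=p^f$), hence is contained in the kernel of every homomorphism $H\to\ZZ_2$; therefore $\Hom(H,\ZZ_2)\cong\Hom(\ZZ_{(q-1)/2},\ZZ_2)$, which is $\ZZ_2$ precisely when $(q-1)/2$ is even, i.e.\ $q\equiv 1\pmod 4$, and is $0$ when $q\equiv -1\pmod 4$. (In the latter case one can also simply note that $|H|=q(q-1)/2$ is odd, so $H^1(H,T)=0$ directly.)

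No step here presents a real obstacle; the only points needing care are the identification of the point stabilizer $H$ and its semidirect-product structure --- already recorded in the proof of Lemma \ref{hgv} --- and the elementary remark that a normal subgroup of odd order lies in the kernel of every homomorphism to $\ZZ_2$. The genuinely delicate part of the case $q\equiv 1\pmod 4$ lies in the subsequent analysis of the connecting map $\delta$ in the exact sequence (\ref{liv}), not in this lemma.
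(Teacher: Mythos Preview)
Your proof is correct and follows essentially the same approach as the paper. For $(i)$ the paper mentions both the complement-counting interpretation and the universal coefficient route you take; for $(ii)$ the paper likewise applies Shapiro's lemma and then, instead of your direct identification $H^1(H,\ZZ_2)=\Hom(H,\ZZ_2)$, invokes Lemma~\ref{ucoe} to reach the same $\Hom(\ZZ_{(q-1)/2},\ZZ_2)$---a cosmetic difference only.
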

\begin{proof} $(i)$ This follow from the interpretation of $|H^1(G,I)|$ as the number of conjugacy classes of complements to $I$ in $I\times G$, which is clearly 1, or from Lemma \ref{ucoe} for $i=1$.

$(ii)$ We again use the fact that $V\cong T^G$ as in the proof of Lemma \ref{hgv}, where $T$ is the principal $\FF_2H$-module for the Borel subgroup $H\le G$. We have $H^1(G,V)\cong H^1(H,T)$ by Lemma \ref{shap}. Assume that $q\equiv -1\pmod 4$. (Although we have covered this case in the previous sections, we still consider it for the sake of completeness.) We have that $|H|$ is odd and $|H^1(H,T)|=0$ by the Schur-Zassenhaus theorem. Let $q\equiv 1\pmod 4$. By Lemma \ref{ucoe},
$$
H^1(H,T)\cong\Hom(H_1(H,\ZZ),T)\oplus\Ext(H_0(H,\ZZ),T),
$$
where the second summand is zero as $H_0(H,\ZZ)\cong \ZZ$ and $\Ext(\ZZ,T)=0$, since $\ZZ$ is projective. Now, $H_1(H,\ZZ)\cong \ZZ_{(q-1)/2}$ and $\Hom(\ZZ_{(q-1)/2},\ZZ_2)=\ZZ_2$, since $(q-1)/2$ is even by assumption.
The claim follows.
\end{proof}
Lemma \ref{h1} implies that $\Im\delta\cong H^1(G,W)/H^1(G,V)$ and so it remains to determine $H^1(G,W)$.
To this end, we consider the long exact sequence
\begin{equation}\label{biv}
H^0(G,W)\to H^0(G,I) \to H^1(G,U) \to H^1(G,W) \to H^1(G,I)
\end{equation}
induced by (\ref{bst}). We have $H^1(G,I)=0$ by Lemma~\ref{h1} and $H^0(G,W)=0$ by Lemma~\ref{bur}. Therefore,
$$
H^1(G,W)\cong H^1(G,U)/H^0(G,I),
$$
and, since $H^0(G,I)=\ZZ_2$ is known, in view of Lemma \ref{bur} it remains to determine the groups $H^1(G,U_{\pm})$ for the absolutely irreducible modules $U_{\pm}$. This will settle the case $q\equiv 1\pmod 4$.

{\em Acknowledgement.} The author is thankful to Prof. D. O. Revin for drawing attention to Problem \ref{mpr} and discussing the content of this paper.

\end{document}